\newtheorem{theorem}{Theorem}[section]		
\newtheorem{lemma}[theorem]{Lemma}
\newtheorem{claim}[theorem]{Claim}
\newtheorem{proposition}[theorem]{Proposition}
\newtheorem{problem}[theorem]{Problem}
\newcommand{\beq}[1]{\begin{equation}\label{#1}}
\newcommand{\enq}[0]{\end{equation}}
\def\Prob{\mathbb{P}}
\def\E{\mathbb{E}}
\def\N{\mathbb{N}}
\def\LL{\mathcal{L}}
\def\SS{\mathcal{S}}
\def\mathscr{\mathcal}
\def\HH{\mathcal{H}}
\def\BB{\mathcal{B}}
\DeclareMathOperator\Deg{d}
\DeclareMathOperator\Tr{Tr}
\newcommand{\eps}{\ensuremath{\varepsilon}}
\let\originalleft\left
\let\originalright\right
\renewcommand{\left}{\mathopen{}\mathclose\bgroup\originalleft}
\renewcommand{\right}{\aftergroup\egroup\originalright}
\begin{document}

\begin{frontmatter}[classification=text]

\title{Simplicial Homeomorphs and Trace-Bounded Hypergraphs} 

\author[jl]{Jason Long}
\author[bn]{Bhargav Narayanan\thanks{Supported in part by NSF grants DMS-1800521 and CCF-1814409}}
\author[cy]{Corrine Yap\thanks{Supported in part by NSF grant DMS-1800521}}

\begin{abstract}
Our first main result is the following basic fact about simplicial complexes: for each $k \in \N$, there exists an exponent $\lambda_k \ge k^{-2k^2}$ such that for any $k$-complex $\SS$, every $k$-complex on $n \ge n_0(\SS)$ vertices with at least $n^{k+1 - \lambda_k}$ facets contains a homeomorphic copy of $\SS$. The existence of these exponents was suggested by Linial in 2006 but was previously known only in dimensions one and two, both by highly dimension-specific arguments: the existence of $\lambda_1$ is a result of Mader from 1967, and the existence of $\lambda_2$ was established by Keevash--Long--Narayanan--Scott in 2020. We deduce this geometric theorem from a purely combinatorial result about trace-bounded hypergraphs, where an $r$-partite $r$-graph $H$ with partition classes $V_1, V_2, \dots, V_r$ is said to be $d$-trace-bounded if for each $2 \le i \le r$, all the vertices of $V_i$ have degree at most $d$ in the trace of $H$ on $V_1 \cup V_2 \cup \dots \cup V_i$. Our second main result is the following fact about degenerate trace-bounded hypergraphs: for all $r \ge 2$ and $d\in\N$, there exists an exponent $\alpha_{r,d} \ge (5rd)^{1-r}$ such that for any $d$-trace-bounded $r$-partite $r$-graph $H$, every $r$-graph on $n \ge n_0(H)$ vertices with at least $n^{r - \alpha_{r,d}}$ edges contains a copy of $H$. This strengthens a theorem of Conlon--Fox--Sudakov from 2009 who showed that a similar result holds for $r$-partite $r$-graphs $H$ satisfying the stronger hypothesis that the vertex-degrees in all but one of its partition classes are bounded (in $H$, as opposed to in its traces).
\end{abstract}
\end{frontmatter}

\section{Introduction}
This paper aims to answer the following basic geometric question about $k$-dimensional simplicial complexes (or \emph{$k$-complexes} for short) that arises in  the `high-dimensional combinatorics' programme of Linial~\cite{linial4, linial3}.
\begin{problem}\label{kgraphhomeo}
Given a $k$-complex $\SS$, how many facets can a $k$-complex on $n$ vertices have if it contains no homeomorphic copy of $\SS$?
\end{problem}
For a $k$-complex $\SS$, let $\lambda(\SS)$ be the supremum over all $\lambda$ for which the maximum number of facets in a $k$-complex on $n$ vertices with no homeomorphic copy of $\SS$ is $O(n^{k+1 - \lambda})$. It is essentially folklore --- see~\cite{myhomeo} for a discussion --- that $\lambda(\SS) > 0$ for every $k$-complex $\SS$. A much more intriguing possibility, namely that for every $k$-complex $\SS$, $\lambda(\SS)$ is bounded below uniformly by some universal exponent $\lambda_k > 0$ that depends only on the dimension $k$, was suggested by Linial~\cite{linialques1, linialques2} (explicitly for dimension two and implicitly for higher dimensions); our first main result establishes this in every dimension.

\begin{theorem}\label{mainthm1}
	For all $k \in \N$, there is a $\lambda_k \ge k^{-2k^2}$ such that for any $k$-complex $\SS$, every $k$-complex on $n \ge n_0(\SS)$ vertices with at least $n^{k+1 - \lambda_k}$ facets contains a homeomorphic copy of $\SS$.
\end{theorem}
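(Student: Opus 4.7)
The plan is to reduce Theorem~\ref{mainthm1} to the combinatorial trace-bounded hypergraph theorem described in the abstract. The core idea is to replace the arbitrary target complex $\SS$ by a canonical subdivision whose facets, viewed as a $(k+1)$-partite $(k+1)$-uniform hypergraph, have uniformly bounded trace degrees regardless of the geometry of $\SS$.

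Concretely, I would take $\SS^*$ to be the barycentric subdivision of $\SS$. Recall that $\SS^*$ has one vertex $[\sigma]$ for each non-empty face $\sigma$ of $\SS$, and its facets are precisely the sets $\{[\sigma_0], \ldots, [\sigma_k]\}$ corresponding to maximal chains $\sigma_0 \subsetneq \sigma_1 \subsetneq \cdots \subsetneq \sigma_k$ with $\dim \sigma_i = i$. Crucially, $\SS^*$ is homeomorphic to $\SS$, so any injective simplicial map from $\SS^*$ into a $k$-complex $\mathcal{K}$ that sends facets to facets already produces a homeomorphic copy of $\SS$ inside $\mathcal{K}$; equivalently, it suffices to embed the $(k+1)$-uniform facet hypergraph of $\SS^*$ into that of $\mathcal{K}$. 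I would partition the vertex set of $\SS^*$ into classes $V_1, \ldots, V_{k+1}$ by dimension, setting $V_i = \{[\sigma] : \dim \sigma = i-1\}$, so that the facet hypergraph becomes a $(k+1)$-partite $(k+1)$-graph in a canonical way.

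The key calculation is that this hypergraph is $d$-trace-bounded with $d = (k+1)!$. Indeed, the trace of the facet hypergraph on $V_1 \cup \cdots \cup V_i$ consists exactly of the sets $\{[\sigma_0], \ldots, [\sigma_{i-1}]\}$ arising from initial chains with $\dim \sigma_j = j$, and a fixed vertex $[\tau] \in V_i$ (where $\tau$ is an $(i-1)$-dimensional face) lies in precisely $i!$ such trace edges --- one per ordering of the $i$ vertices of $\tau$. This count is intrinsic to $\tau$ and does not depend on how $\tau$ sits inside $\SS$, so the trace degree is uniformly bounded by $(k+1)!$, independently of $\SS$. By contrast, the ordinary degree of $[\tau]$ in $\SS^*$ depends on how many facets of $\SS$ contain $\tau$ and can be arbitrarily large, which is precisely why the trace-bounded framework is essential here.

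Applying the trace-bounded hypergraph theorem with $r = k+1$ and $d = (k+1)!$ then yields an exponent $\alpha_{r,d} \ge (5(k+1)(k+1)!)^{1-(k+1)} = (5(k+1)(k+1)!)^{-k}$, and a routine estimate shows that this lower bound exceeds $k^{-2k^2}$ for all sufficiently large $k$; the cases $k = 1, 2$ are already handled by Mader's theorem and by Keevash--Long--Narayanan--Scott. The main obstacle is not the reduction above --- which simply observes that barycentric subdivision converts arbitrary vertex-degrees in $\SS$ into uniformly bounded \emph{trace}-degrees --- but rather the underlying combinatorial theorem for trace-bounded hypergraphs, whose proof (strengthening Conlon--Fox--Sudakov) is where the real work lies.
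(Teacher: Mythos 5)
Your proposal is correct and is essentially identical to the paper's proof: the paper's ``canonical subdivision'' $\tilde\SS$ is precisely the barycentric subdivision $\SS^*$ you describe (one vertex per nonempty face, facets given by maximal chains), the partition by face dimension is the same, the trace-degree count of $i!$ for a vertex in $V_i$ is the same, and the final step is the same application of Theorem~\ref{mainthm2} with $r=k+1$, $d=(k+1)!$ together with citing Mader and Keevash--Long--Narayanan--Scott for $k=1,2$. The only (cosmetic) gap is that you say the bound $(5(k+1)(k+1)!)^{-k}\ge k^{-2k^2}$ holds for ``sufficiently large $k$'' while you would need it for all $k\ge 3$; a quick check confirms it does hold for all $k\ge 3$, which is exactly what the paper asserts.
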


The existence of such universal exponents $\lambda_k$ as in Theorem~\ref{mainthm1} was previously known only for $k = 1$ and $k=2$; that the optimal value of $\lambda_1$ is $1$ is a classical result of Mader~\cite{mader1}, and that $\lambda_2 \ge 1/5$ was shown recently by Keevash, Scott and the first and second authors~\cite{myhomeo}. The conjecturally optimal value of $\lambda_2$ is $1/2$, and establishing this remains open, though a beautiful recent result of Kupavskii, Polyanskii, Tomon, and Zakharov~\cite{surfaces} establishes that $\lambda(\SS) = 1/2$ whenever $\SS$ is the triangulation of any closed orientable two-dimensional surface, generalising a classical result of Brown, Erd\H{o}s and S\'os~\cite{bes} establishing this fact for the two-sphere.

It is worth mentioning that all of~\cite{bes, myhomeo, surfaces, mader1} proceed via arguments that are highly specific to dimensions one and two. Indeed, the main novelty in the proof of Theorem~\ref{mainthm1} is our ability to simultaneously handle all dimensions; this generality comes at a cost, however, since the resulting bounds in low dimensions are not very competitive with those in the aforementioned results.

We shall deduce Theorem~\ref{mainthm1} from a purely combinatorial result, of some independent interest, about the Tur\'an numbers of a large class of $r$-uniform hypergraphs (or \emph{$r$-graphs}, for short). For an $r$-partite $r$-graph $H$, let $\alpha(H)$ be the supremum over all $\alpha$ for which the maximum number of edges in an $r$-graph on $n$ vertices with no copy of $H$ is $O(n^{r - \alpha})$. A well-known result of Erd\H{o}s~\cite{degen} says that $\alpha(H) > 0$ for every $r$-partite $r$-graph $H$; this value $\alpha(H)$ is called the \emph{Tur\'an exponent} of $H$, and the determination of these exponents is the central problem --- see~\cite{turan3, turan2} --- of degenerate Tur\'an theory.

To state our second result, we need a definition. We say that an $r$-partite $r$-graph $H$ with partition classes $V_1, V_2, \dots, V_r$ is \emph{$d$-trace-bounded} if for each $2 \le i \le r$, all the vertices of $V_i$ have degree at most $d$ in the trace of $H$ on $V_1 \cup V_2 \cup \dots \cup V_i$. Our second main result establishes the existence of universal lower bounds on the Tur\'an exponents of degenerate trace-bounded hypergraphs.

\begin{theorem}\label{mainthm2}
	For all $r \ge 2$ and $d\in\N$, there is an $\alpha_{r,d} \ge (5rd)^{1-r}$ such that for any $d$-trace-bounded $r$-partite $r$-graph $H$, every $r$-graph on $n \ge n_0(H)$ vertices with at least $n^{r - \alpha_{r,d}}$ edges contains a copy of $H$.
\end{theorem}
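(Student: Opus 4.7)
The plan is to prove Theorem~\ref{mainthm2} by induction on $r$, with dependent random choice as the main tool. For the base case $r = 2$, a $d$-trace-bounded bipartite graph is simply a bipartite graph in which every vertex of $V_2$ has degree at most $d$ in $H$; a standard dependent random choice argument then yields $\alpha_{2,d} \ge 1/d \ge (10d)^{-1}$.

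For the inductive step, fix $\alpha = \alpha_{r,d} = (5rd)^{1-r}$. Given an $r$-graph $G$ on $n$ vertices with at least $n^{r - \alpha}$ edges, I would first apply a random $r$-colouring to reduce to the case that $G$ is $r$-partite with parts $W_1, \ldots, W_r$ (each of size at most $n$), retaining $\Omega(n^{r-\alpha})$ crossing edges. Let $G_{r-1} \subseteq W_1 \times \cdots \times W_{r-1}$ be the shadow of $G$ onto the first $r-1$ parts, and for each $\bar w \in G_{r-1}$ write $N(\bar w) \subseteq W_r$ for its set of extensions to an edge of $G$. Discarding tuples with $|N(\bar w)| < n^{1-\alpha}/2$, I may assume $|G_{r-1}| \ge n^{r-1-\alpha}/2$ and $|N(\bar w)| \ge n^{1-\alpha}/2$ for every remaining $\bar w$.

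The heart of the argument is the following dependent random choice step. Pick a uniformly random subset $T \subseteq W_r$ of size $t := (d-1)(r-1) + 1$, and let $G^*$ consist of the $\bar w \in G_{r-1}$ with $T \subseteq N(\bar w)$. A Jensen-type calculation gives $\E[|E(G^*)|] = \Omega(n^{r-1-\alpha(t+1)})$, while the expected number of $d$-subsets of $G^*$ whose common $W_r$-neighbourhood has size less than $|V_r|$ is at most $n^{d(r-1) - t}|V_r|^t$. The choice of $t$ makes the latter negligible compared to the former once $n$ is large, and the parameters are calibrated so that
\[
\alpha(t+1) \le \alpha_{r-1,d}, \qquad \text{since} \qquad \frac{\alpha_{r-1,d}}{\alpha_{r,d}} = 5rd\left(\frac{r}{r-1}\right)^{r-2} \ge 5rd > t+1.
\]
Deleting the bad $d$-subsets produces an $(r-1)$-partite $(r-1)$-graph $G^*$ on $W_1 \cup \cdots \cup W_{r-1}$ with at least $n^{(r-1) - \alpha_{r-1,d}}$ edges in which every $k$-subset of edges with $k \le d$ has at least $|V_r|$ common $W_r$-neighbours.

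To close the induction, observe that the reduced hypergraph $H' := f_{r-1}$ (the trace of $H$ on $V_1 \cup \cdots \cup V_{r-1}$) is itself $d$-trace-bounded, since its traces on $V_1 \cup \cdots \cup V_j$ for each $j \le r-1$ coincide with those of $H$. The inductive hypothesis applied to $H'$ and $G^*$ then yields an embedding $\phi' \colon H' \hookrightarrow G^*$, which I would extend to a copy of $H$ by placing the vertices of $V_r$ greedily: each $z \in V_r$ lies in at most $d$ edges of $H$, giving at most $d$ edges of $G^*$ through $\phi'$, whose common $W_r$-neighbourhood has size at least $|V_r|$, so an unused image $\phi(z)$ always exists. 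The main obstacle is the parameter tuning in the dependent random choice step: $t$ must be large enough (of order $dr$) to suppress the bad $d$-subsets, yet small enough that $\alpha(t+1) \le \alpha_{r-1,d}$ so that induction applies. The exponent $(5rd)^{1-r}$ is chosen precisely to afford this slack at each level of the recursion.
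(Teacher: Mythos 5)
Your proof is correct, but it takes a genuinely different route from the paper's. The paper does \emph{not} induct on $r$: for a fixed $r$, it iterates a single ``descent'' lemma (Lemma~\ref{mainlem2}) to produce a nested chain of links $G_{r-1} \supset \cdots \supset G_1$ while explicitly tracking, at each level $i$, families $\BB(J, G_i)$ of bad copies of \emph{every} small $i$-partite $i$-graph $J$ with at most $d$ edges (bad either because their common neighbourhood one level up is small, or because they extend to too many bad configurations one level up); the embedding is then built from $Y_1$ up to $Y_r$ by a randomized greedy choice that avoids all the bad copies at each level. Your argument instead inducts on the uniformity $r$: a single dependent-random-choice step (random $T \subseteq W_r$ of size $t = (d-1)(r-1)+1$, then the common link $G^*$) reduces the problem to $r-1$, with the only ``bad'' objects being the $\le d$-edge subsets of $G^*$ whose common $W_r$-neighbourhood is too small, and these are deleted before invoking the inductive hypothesis; the induction hypothesis then finds $\Tr_{r-1}(H)$ (which you correctly observe is still $d$-trace-bounded) inside $G^*$, and the last partition class is placed greedily using the surviving common-neighbourhood guarantee. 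This packages the bookkeeping that the paper carries out by hand inside the inductive black box, and it is arguably cleaner; the price is that the constants must be re-checked at each inductive level (the $r$-partition loses a factor $r!/r^r$, parts have size $\sim n/r$ rather than $n$, etc.), but your slack $\alpha_{r-1,d}/\alpha_{r,d} = 5rd\,(r/(r-1))^{r-2} \ge 5(t+1)$ is more than enough to absorb these. Two small points you should make explicit: (i) it suffices to bound bad $d$-subsets only, since once the cleaned $G^*$ has at least $d$ edges, any bad $k$-subset with $k < d$ would extend to a surviving bad $d$-subset; and (ii) the copy of $\Tr_{r-1}(H)$ produced by the inductive hypothesis need not respect the partition $W_1, \dots, W_{r-1}$, but this is harmless because the extension to $V_r$ only needs the image links to be $\le d$-edge subsets of $G^*$ with $\ge |V_r|$ common $W_r$-neighbours.
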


Theorem~\ref{mainthm2} generalises a result of Conlon, Fox and Sudakov~\cite{CFS} which asserts, for all $r\ge 2$ and $d \in \N$, the existence of exponents $\alpha'_{r,d} > 0$ (of order roughly $(rd)^{1-r}$ as well) with the following property: for any $r$-partite $r$-graph $H$ with partition classes $V_1, V_2, \dots, V_r$ in which the degrees of the vertices in each of $V_2, V_3, \dots, V_r$ are at most $d$ in $H$, we have $\alpha(H) \ge \alpha'_{r,d}$. It is easy to see that any $r$-partite $r$-graph $H$ to which the aforementioned result applies is $d$-trace-bounded as well, so Theorem~\ref{mainthm2} clearly implies this result. Of course, Theorem~\ref{mainthm2} is genuinely stronger than the result in~\cite{CFS} since not every trace-bounded $r$-partite $r$-graph has bounded vertex-degrees in all but one of its partition classes, and indeed, the full strength of Theorem~\ref{mainthm2} will be crucial in proving Theorem~\ref{mainthm1}.

Two more remarks about Theorem~\ref{mainthm2} are in order. First, the fact that the optimal value of $\alpha_{2,d}$ is $1/d$ (as opposed to the $1/(10d)$ promised by Theorem~\ref{mainthm2}) is a result of Alon, Krivelevich and Sudakov~\cite{AKS} which may also be read out of earlier work of F\"uredi~\cite{Furedi}. Second, it is known that, in a sense, something like the trace-boundedness hypothesis in Theorem~\ref{mainthm2} is necessary if one expects to control the Tur\'an exponent in terms of vertex-degrees; indeed, from~\cite{KMV}, we know that for every $\eps > 0$, there exists a 3-partite 3-graph $H$ with all the vertex-degrees in one of its partition classes being 1 for which $\alpha(H) \le \eps$.

Let us summarise the discussion above by specialising to $3$-graphs. For a 3-partite 3-graph $H$ with partition classes $V_1$, $V_2$ and $V_3$, we have the following conclusions about its Tur\'an exponent $\alpha(H)$, listed below in order of decreasing strength of the hypotheses on $H$.
\begin{enumerate}
	\item If the degrees of the vertices in both $V_3$ and $V_2$ are bounded above by $d$ in $H$, then~\cite{CFS} shows that $\alpha(H)\ge  1/(15d)^2$.
	\item If the degrees of the vertices in $V_3$ are bounded above by $d$ in $H$, and the degrees of the vertices in $V_2$ are bounded above by $d$ in the trace of $H$ on $V_1 \cup V_2$, then Theorem~\ref{mainthm2} says that $\alpha(H) \ge 1/(15d)^2$.
	\item If all we know is that the degrees of the vertices in $V_3$ are bounded above by $d$ in $H$, then~\cite{KMV} shows that $\alpha(H)$ need not be bounded below uniformly in terms of $d$, even when $d = 1$.
\end{enumerate}

This paper is organised as follows. We begin by establishing some notation and making precise some of the undefined terminology appearing in the introduction in Section~\ref{sec:prelim}. The deduction of Theorem~\ref{mainthm1} from Theorem~\ref{mainthm2} is given in Section~\ref{sec:homs}, and the proof of Theorem~\ref{mainthm2} follows in Section~\ref{sec:proof}. We conclude with a discussion of some open problems in Section~\ref{sec:conc}.


\section{Preliminaries}\label{sec:prelim}
We shall only consider \emph{homogeneous} $k$-complexes, namely $k$-complexes all of whose facets are $k$-dimensional. Consequently, we may specify a $k$-complex $\SS$ over a vertex set $V$ by listing the family $F$ of its $k$-dimensional facets, each of which is a subset of $V$ of cardinality $k+1$ (though $\SS$ is, strictly speaking, the family of all subsets of its facets). We say that a $k$-complex $\mathscr{T}$ contains a \emph{homeomorph} (or a \emph{homeomorphic copy}) of a $k$-complex $\SS$ if there is a subcomplex of $\mathscr{T}$ that is homeomorphic to $\SS$. An $r$-graph $G$ on a vertex set $V$ is a family $E$ of $r$-element subsets of $V$ called the edges of $G$. A $k$-complex $\SS$ may hence be identified with a $(k+1)$-graph $G$ by viewing the facets of $\SS$ as the edges of $G$, and vice versa. When we specify a $k$-complex by its set of facets alone, or an $r$-graph by its edge set alone, the vertex set of the $k$-complex or the $r$-graph in question is taken to be the span of the facets or the edges respectively.

Since most of the work here will be in proving Theorem~\ref{mainthm2}, let us set out some more notation for working with an $r$-graph $G$. We write $v(G)$ and $e(G)$ for the number of vertices and edges of $G$ respectively. The \emph{link $\LL(v, G)$} of a vertex $v\in V(G)$ in $G$ is the $(r-1)$-graph whose edges are those sets $S$ for which $\{v\} \cup S$ is an edge of $G$, and the \emph{degree $\Deg(v, G)$} of $v$ is the number of edges of $G$ containing $v$, or equivalently $\Deg(v, G) = e(\LL(v, G))$. For an $(r-1)$-graph $J$ with $V(J) \subset V(G)$, its \emph{common neighbourhood $\Gamma(J, G)$} in $G$ is the set of vertices $v \in V(G)$ for which $\{v\} \cup S$ is an edge of $G$ for each edge $S \in E(J)$. Finally, for a subset $U \subset V(G)$ of the vertex set of $G$, the \emph{trace $\Tr(G, U)$} of $G$ on $U$ is the family $\{S \cap U: S \in E(G)\}$.

An $r$-graph $G$ is \emph{$r$-partite} if its vertex set admits a partition $V(G) = V_1 \cup V_2 \cup \dots \cup V_r$ such that every edge of $G$ contains exactly one vertex each from each of these $r$ partition classes. When $G$ is an $r$-partite $r$-graph with partition classes $V_1, V_2, \dots, V_r$, we abbreviate $\Tr(G, V_1\cup V_2 \cup \dots \cup V_i)$ by $\Tr_i(G)$, noting that $\Tr_i(G)$ is an $i$-graph for each $1 \le i \le r$. Finally, we say that an $r$-partite $r$-graph $G$ with partition classes $V_1, V_2, \dots, V_r$ is \emph{$d$-trace-bounded} if for each $2 \le i \le r$, we have
$\Deg(v, \Tr_i(G)) \le d$ for each $v \in V_i$.

It will be convenient for us to work with a large $r$-partite subgraph of a given $r$-graph; the following fact facilitates this, and follows from an easy averaging argument.

\begin{proposition}\label{triedges} Any $r$-graph on $rn$ vertices with $m$ edges contains an $r$-partite subgraph with partition classes each of size $n$ and at least $(r! / r^r)m$ edges. \qed
\end{proposition}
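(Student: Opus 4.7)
The plan is to proceed via a standard probabilistic (or equivalently, double-counting) averaging argument. I would partition the $rn$ vertices uniformly at random into $r$ ordered classes $V_1, V_2, \dots, V_r$, each of size exactly $n$, and then keep only those edges that are \emph{rainbow}, i.e., contain exactly one vertex from each $V_i$. It will suffice to show that each edge is rainbow with probability at least $r!/r^r$, since linearity of expectation will then produce a partition for which the resulting $r$-partite subgraph has at least $(r!/r^r)m$ edges.

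To compute the probability, I would fix an edge $e = \{x_1, \dots, x_r\}$ and count uniform partitions into ordered blocks of size $n$. The total number of such partitions is $(rn)!/(n!)^r$, while the number in which the $x_j$'s are spread one per class is $r! \cdot (rn-r)!/((n-1)!)^r$. The resulting probability simplifies to
\[
\frac{r!\, n^r}{(rn)(rn-1)\cdots(rn-r+1)},
\]
and since $(rn)(rn-1)\cdots(rn-r+1)\le (rn)^r = r^rn^r$, this quantity is at least $r!/r^r$, as required.

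There is really no obstacle here: the only point to be mildly careful about is the monotonicity/inequality $(rn-i)\le rn$ used to bound the falling factorial by $(rn)^r$, which is what gives the clean constant $r!/r^r$ rather than an $n$-dependent fraction. An equally valid alternative is a direct double-counting argument over ordered partitions (summing, over all partitions, the number of rainbow edges they admit), which avoids any probabilistic language but yields the same inequality.
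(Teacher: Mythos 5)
Your proof is correct and is exactly the standard averaging argument the paper alludes to (the paper states the proposition without proof, remarking only that it "follows from an easy averaging argument"). The probability computation, reduction to $r!\,n^r/\bigl((rn)(rn-1)\cdots(rn-r+1)\bigr)$, and the falling-factorial bound are all in order.
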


\section{Homeomorphs}\label{sec:homs}
Our proof of Theorem~\ref{mainthm1} relies on the following construction. Given a $k$-complex $\SS$, the \emph{canonical subdivision of $\SS$} is a $k$-complex $\tilde \SS$ homeomorphic to $\SS$ constructed as follows. The vertex set of $\tilde \SS$ is given by
\[ V(\tilde \SS) = V(\SS) \cup \{v_T: T \subset V(\SS), |T| \ge 2, \text{ and $T$ is contained in some facet of $\SS$} \};\]
in other words, we start with $V(\SS)$ and for each $2 \le t \le k+1$, we introduce a new vertex $v_T$ for each $t$-set $T$ contained in some facet of $\SS$. The facets $F(\tilde \SS)$ of $\tilde \SS$ are then obtained by subdividing each facet of $\SS$ into $(k+1)!$ facets as follows: for each facet $S \in F(\SS)$ of $\SS$, consider the $(k+1)!$ possible chains
\[ \{v\} \subsetneq T_2 \subsetneq T_3 \subsetneq \dots \subsetneq T_k \subsetneq S \]
with $v$ a vertex of $\SS$, and include $\{v, v_{T_2}, v_{T_3}, \dots, v_{T_k}, v_S\}$ in $F(\tilde \SS)$. It is not hard to see that $\tilde \SS$ is homeomorphic to $\SS$, as illustrated in Figure~\ref{fig:subd}.

\begin{proof}[Proof of Theorem~\ref{mainthm1}]
	We shall prove the result with $\lambda_k = \alpha_{k+1, (k+1)!}$, where $\alpha_{k+1, (k+1)!}$ is as promised by Theorem~\ref{mainthm2}. We note that this establishes the bound
	\[\lambda_k \ge (5(k+1)(k+1)!)^{-k} \ge k^{-2k^2}\]
	for $k \ge 3$; that $\lambda_k \ge k^{-2k^2}$ for all $k \in \N$ follows from the facts, respectively from~\cite{mader1} and~\cite{myhomeo}, that $\lambda_1 \ge 1$ and $\lambda_2 \ge 1/5$.

	Given a $k$-complex $\SS$, we first construct its canonical subdivision $\tilde \SS$ as described above. When this $k$-complex $\tilde \SS$ is viewed as a $(k+1)$-graph, it is clear that it is $(k+1)$-partite with partition classes $V_1, V_2, \dots, V_{k+1}$, where $V_1 = V(\SS)$ and for $2 \le t \le k+1$, $V_t$ consists of those new vertices $v_T$ introduced in $\tilde \SS$ for each $t$-set $T$ contained in some facet of $\SS$. Furthermore, $\tilde \SS$ is $((k+1)!)$-trace-bounded; indeed, it is easy to see, for each $2 \le t \le k+1$, that for every $v \in V_t$, we have $\Deg(v, \Tr_t(\tilde \SS)) = t! \le (k+1)!$.

	It follows from Theorem~\ref{mainthm2} that provided $n \ge n_0(\SS)$ is large enough, any $k$-complex on $n$ vertices with $n^{k+1-\lambda_k}$ facets contains $\tilde \SS$ as a subcomplex, and therefore, a homeomorph of $\SS$.
\end{proof}
\begin{figure}
	\centering
	\begin{tikzpicture}[scale=3.6, every node/.style={scale=1}]
		\draw[fill,gray,opacity=0.1] (0,0)--(1,0)--(1/2,0.866)--(0,0);
		\draw (0,0)--(1,0)--(1/2,0.866)--(0,0);
		\draw (0.5,-0.15) node[] {$\SS$};
		\draw (2.5,-0.15) node[] {$\tilde \SS$};

		\draw[fill,gray,opacity=0.1] (2,0)--(3,0)--(2.5,0.866)--(2,0);
		\draw (2,0)--(3,0)--(2.5,0.866)--(2,0);
		\draw (0,0) node[] {$\bullet$};
		\draw (1,0) node[] {$\bullet$};
		\draw (1/2,0.866) node[] {$\bullet$};

		\draw (2,0)--(2.5,0.289);
		\draw (3,0)--(2.5,0.289);
		\draw (2.5,0.866)--(2.5,0.289);

		\draw (2.25,0.866/2) node[blue] {$\bullet$};
		\draw (2.75,0.866/2) node[blue] {$\bullet$};
		\draw (2.5,0) node[blue] {$\bullet$};

		\draw (2.25,0.866/2)--(2.5,0.289)--(2.75,0.866/2);

		\draw (2.5,0.289)--(2.5,0);
		\draw (2,0) node[] {$\bullet$};
		\draw (3,0) node[] {$\bullet$};
		\draw (2.5,0.866) node[] {$\bullet$};
		\draw (2.5,0.289) node[red] {$\bullet$};

		\draw[->,  line width=0.3mm] (1.2,0.433) -- (1.8,0.433);
	\end{tikzpicture}
	\caption{The construction of $\tilde \SS$ for a $2$-complex $\SS$.}\label{fig:subd}
\end{figure}
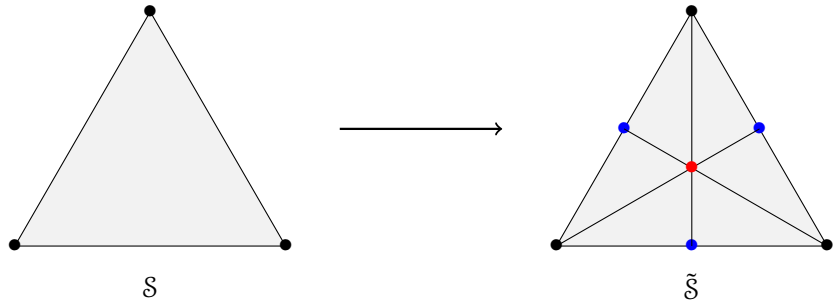

\section{Trace-bounded hypergraphs}\label{sec:proof}
We start with a lemma that says that if an $r$-partite $r$-graph has many edges and a small number of (small) subgraphs that are `marked' as being bad, then we may pass to an $(r-1)$-partite $(r-1)$-graph in one of its traces that has similar properties. To state this key lemma, we need a little set up, to which we now turn.

For each $r, d \in \N$, let $\HH(r, d)$ denote the (finite) family of all nonempty $r$-partite $r$-graphs with at most $d$ edges, taken up to isomorphism; recall our convention that the vertex set of an $r$-graph specified by its edge set alone is the span of its edges, whence $r \le v(J) \le rd$ for each $J \in \HH(r,d)$.

Suppose that $r \ge 2$, and let $G$ be an $r$-partite $r$-graph with partition classes $X_1, X_2, \dots, X_r$ each of size $n$. Suppose that for each $J \in \HH(r, d)$, a subset $\BB(J, G)$ of the copies of $J$ in $G$ have been \emph{marked (as being bad)}. We say that an $(r-1)$-graph $L \subset \Tr_{r-1}(G)$ with at most $d$ edges is \emph{$\beta$-bad with respect to $G$} if
\begin{enumerate}[label = {\bfseries{B\arabic{enumi}}}]
	\item\label{B1} either $|\Gamma(L, G)|\le n^{1-\beta}$, or
	\item\label{B2} if there exists some $J \in \HH(r, d)$ such that the number of marked copies of $J$ in $G$ containing $L$ is at least
	\[n^{-2\beta}\left(n^{1-\beta}\right)^{v(J)-v(L)-1}|\Gamma(L, G)|.\]
\end{enumerate}

The following lemma will be the workhorse that drives the proof of our main result.
\begin{lemma}\label{mainlem2}
	For a fixed $r \ge 2$, $d\in\N$, and $\eps, \delta >0$, the following holds for all sufficiently large $n \in \N$. Let $G$ be an $r$-partite $r$-graph  with partition classes $X_1, X_2, \dots, X_r$ each of size $n$ and $e(G) \ge 2^r n^{r-\eps}$ in which, for each $J \in \HH(r, d)$, there is a set $\BB(J, G)$ of at most $n^{v(J)-\delta}$ marked copies of $J$ in $G$. Then, for $\beta =\delta/(rd+1)$, there is an $(r-1)$-partite $(r-1)$-graph $G' \subset \Tr_{r-1}(G)$ with partition classes $X_1, X_2, \dots, X_{r-1}$ such that
	\begin{enumerate}
		\item $e(G') \ge 2^{r-1} n^{r-1-\eps}$, and
		\item for each $L \in \HH(r-1, d)$, the set $\BB(L, G')$ of copies of $L$ in $G'$ that are $\beta$-bad with respect to $G$ satisfies
		      \[|\BB(L, G')| \le n^{v(L)-\beta + 2\eps}.\]
	\end{enumerate}
\end{lemma}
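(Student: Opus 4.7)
The plan is to take $G' = \LL(v, G)$ for a suitably chosen vertex $v \in X_r$. Any such link is automatically an $(r-1)$-partite subgraph of $\Tr_{r-1}(G)$ with the required partition classes $X_1, \dots, X_{r-1}$, so the task reduces to finding a single $v$ achieving both (i) $\Deg(v, G) \ge 2^{r-1} n^{r-1-\eps}$ and (ii) for each $L \in \HH(r-1, d)$, the number of copies of $L$ in $\LL(v, G)$ that are $\beta$-bad with respect to $G$ is at most $n^{v(L) - \beta + 2\eps}$. For (i), a routine averaging argument using $e(G) \ge 2^r n^{r-\eps}$ and the trivial bound $\Deg(v, G) \le n^{r-1}$ shows that at least $2^{r-1} n^{1-\eps}$ vertices $v \in X_r$ have degree at least $2^{r-1} n^{r-1-\eps}$.

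The heart of the proof is condition (ii), which I would attack by bounding the expected number of bad copies of each fixed $L$ over a uniformly random $v \in X_r$. The key observation is that a labelled copy $\phi$ of $L$ in $\Tr_{r-1}(G)$ appears in $\LL(v, G)$ precisely when $v \in \Gamma(\phi, G)$, so switching the order of summation gives $\sum_{v \in X_r} |\BB(L, \LL(v, G))| = \sum_{\phi \text{ bad}} |\Gamma(\phi, G)|$, with the same identity holding separately for each of the two types of badness. In the B1 case, the defining inequality yields $|\Gamma(\phi, G)| \le n^{1-\beta}$ directly, and since there are at most $n^{v(L)}$ labelled copies to begin with, the average per $v$ is at most $n^{v(L) - \beta}$. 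In the B2 case, I would invert the defining inequality to bound $|\Gamma(\phi, G)|$ in terms of the number $M_J(\phi)$ of marked copies of $J \in \HH(r, d)$ whose trace contains $\phi$, then reverse the roles of $\phi$ and $J$ in the sum: each of the at most $n^{v(J) - \delta}$ marked $J$-copies contains only a constant number, depending on $r$ and $d$, of copies of $L$ in its trace. Collecting exponents gives an average per $v$ of order $n^{v(L) - \delta + \beta(v(J) - v(L) + 1)}$ up to constants, and the specific choice $\beta = \delta/(rd + 1)$ is precisely calibrated so that this exponent is at most $v(L) - \beta$ for every $J \in \HH(r, d)$ and $L \in \HH(r-1, d)$, given the bounds $v(J) \le rd$, $v(L) \ge r - 1$, and the slack afforded by $r \ge 2$.

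To conclude, Markov's inequality and a union bound over the finite family $\HH(r-1, d)$, whose cardinality depends only on $r$ and $d$, show that all but $O_{r, d}(n^{1 - 2\eps})$ vertices of $X_r$ fail condition (ii). Since $n^{1-2\eps}$ is asymptotically much smaller than the lower bound $2^{r-1} n^{1-\eps}$ on the set of vertices satisfying (i), the two sets intersect for all sufficiently large $n$, and any $v$ in this intersection furnishes the desired $G' = \LL(v, G)$. The main obstacle throughout is the bookkeeping in the B2 estimate: one must verify that across all relevant pairs $(v(J), v(L))$ the chosen $\beta$ yields the exact gap of $\beta$ in the exponent, and it is this constraint that forces the denominator $rd + 1$ in the definition of $\beta$.
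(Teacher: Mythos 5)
Your proposal is correct and follows essentially the same route as the paper: take $G' = \mathcal{L}(v,G)$ for a random $v \in X_r$, split the bad copies of $L$ into those failing \textbf{B1} and those failing \textbf{B2}, bound the first by the trivial count of $L$-copies times the survival probability $n^{-\beta}$, and bound the second by the double-counting/reversal argument that each marked $J$-copy contributes to only $O_{r,d}(1)$ traces $L$, giving exactly the exponent $v(L)-\delta+\beta(v(J)-v(L)+1)$ and the calibration $\beta = \delta/(rd+1)$. The only cosmetic difference is in the final aggregation: the paper packages the degree condition and all $L \in \mathcal{H}(r-1,d)$ into a single linear expectation $\E[\frac{e(G')}{2^{r-1}n^{r-1-\eps}} - 1 - \frac{1}{C_2}\sum_L \frac{|\mathcal{B}(L,G')|}{C_1 n^{v(L)-\beta}}] \ge 0$ and picks a vertex where this is nonnegative, whereas you apply Markov per $L$ and a union bound over $\mathcal{H}(r-1,d)$, then intersect with the $\ge 2^{r-1}n^{1-\eps}$ high-degree vertices; both are equivalent first-moment arguments. (Two tiny remarks: the paper only needs $v(L) \ge 1$ rather than $v(L) \ge r-1$, and the paper's notion of a $J$-copy ``extending'' $L'$ uses equality of traces $\Tr_{r-1}(J') = L'$, so each marked $J$-copy contributes to exactly one $L'$ rather than a constant number, avoiding even the $O_{r,d}(1)$ factor — but your version is also fine.)
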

\begin{proof}

	Choose a vertex $x\in X_r$ uniformly at random and let $G' = \LL(x, G) \subset \Tr_{r-1}(G)$ be the link of $x$ in $G$. It is clear that
	\beq{eg'}
	\E[e(G')] = e(G) / n \ge 2^{r} n^{r-1-\eps}.
	\enq

	For each $L \in \HH(r-1, d)$, let $P(L)$ be the set of copies $L'$ of $L$ in $G'$ with $|\Gamma(L', G)|\le n^{1-\beta}$. Next, for $L \in \HH(r-1,d)$ and $J\in \HH(r,d)$, we say that a copy $J'$ of $J$ in $G$ \emph{extends} a copy $L'$ of $L$ in $\Tr_{r-1}(G)$ if $\Tr_{r-1}(J')=L'$. Let $Q(L, J)$ be the set of copies $L'$ of $L$ in $G'$ which extend to at least
	\[ n^{-2\beta}\left(n^{1-\beta}\right)^{v(J)-v(L)-1}|\Gamma(L', G)|\]
	marked copies of $J$ belonging to $\BB(J, G)$. With these definitions in place,	we then have
	\[\BB(L, G')=P(L) \cup \left(\bigcup_{J\in\HH(r,d)}Q(L,J)\right) \]
	for each $L \in \HH(r-1, d)$; indeed, the first term above accounts for~\ref{B1} and the second for~\ref{B2}.

	First, we note that
	\beq{pl}
	\E[|P(L)|] \le n^{v(L)-\beta},
	\enq
	since each copy $L'$ of $L$ in $\Tr_{r-1}(G)$ with $|\Gamma(L', G)|\le n^{1-\beta}$ survives in $G'$ with probability at most $n^{-\beta}$, and the number of copies of $L$ in $\Tr_{r-1}(G)$ is trivially at most $n^{v(L)}$.

	Next, since $|\BB(J, G)|\le n^{v(J)-\delta}$ for each $J\in \HH(r,d)$, we have
	\[\sum_{L'\in Q(L, J) }  n^{-2\beta}\left(n^{1-\beta}\right)^{v(J)-v(L)-1}|\Gamma(L', G)|\le |\BB(J, G)| \le n^{v(J)-\delta},\]
	and by rearranging this, we get
	\begin{align*}
		\sum_{L'\in Q(L, J) }  \frac{|\Gamma(L', G)|}{n} & \le n^{2\beta}\left(n^{\beta-1}\right)^{v(J)-v(L)-1}n^{v(J)-1-\delta} \\
		                                                 & = n^{v(L)-\delta+\beta(1+v(J)-v(L))} \le n^{v(L)-\delta+\beta rd},
	\end{align*}
	where the last inequality uses the trivial facts that $v(J) \le rd$ and $v(L) \ge 1$. It follows that
	\beq{ql}
	\E[|Q(L,J)|] \le n^{v(L)-\delta+\beta rd}
	\enq
	for each $L \in \HH(r-1,d)$ and $J \in \HH(r,d)$.

	Putting the estimates~\eqref{pl} and~\eqref{ql} together, we get
	\beq{bbl}
	\E[|\BB(L, G')|] \le n^{v(L)-\beta}+|\HH(r,d)| n^{v(L)-\delta+\beta rd} = C_1 n^{v(L)-\beta},
	\enq
	where $C_1 = (1 + |\HH(r,d)|)$, the last equality following from the fact that $\delta= \beta(rd+1)$.

	To finish, we set $C_2 = |\HH(r-1,d)|$, and combine~\eqref{eg'} and~\eqref{bbl} to get
	\[
		\E\left[\frac{e(G')}{2^{r-1}n^{r-1-\eps}}- 1 - \frac{1}{C_2}\sum_{L\in \HH(r-1,d)} \frac{|\BB(L, G')|}{C_1  n^{v(L)-\beta}}  \right] \ge 0;
	\]
	consequently, there is at least one vertex in $X_r$ whose link $G'$ has the following properties:
	\begin{enumerate}
		\item $e(G') \ge 2^{r-1} n^{r-1-\eps} $, and
		\item for every $L \in \HH(r-1,d)$, we have
		      \[|\BB(L, G')|  \le \frac{C_1 C_2 n^{v(L)-\beta}e(G')}{2^{r-1} n^{r-1-\eps}} \le 2^{1-r}C_1 C_2 n^{v(L)-\beta +\eps} \le n^{v(L)-\beta + 2\eps};  \]
		      here, we use the facts that $e(G') \le n^{r-1}$, that $\eps > 0$, and that $n$ is sufficiently large.
	\end{enumerate}
	Such an $(r-1)$-graph $G'$ has all the properties we require, proving the lemma.
\end{proof}

With Lemma~\ref{mainlem2} in hand, we are now ready to prove our second main result.
\begin{proof}[Proof of Theorem~\ref{mainthm2}]
	Let $H$ be a $d$-trace-bounded $r$-partite $r$-graph  with partition classes $Y_1, Y_2, \dots, Y_r$. For convenience, we prove that any large $r$-graph with sufficiently many edges on a vertex set whose cardinality is \emph{divisible by $r$} must contain a copy of $H$; of course, this divisibility assumption makes no material difference beyond allowing us to drop floor and ceiling signs. We shall prove the result with the precise value of
	\[\alpha_{r,d} = \frac{1}{10d}\left(\frac{1}{rd+1}\right)^{r-2},\]
	noting that $\alpha_{r,d} \ge (5rd)^{1-r}$ for all $r \ge 2$ and $d\ge1$.

	Given an $r$-graph on $rn$ vertices with at least $(rn)^{r - \alpha_{r,d}}$ edges, then provided $n$ is sufficiently large, we may, by Proposition~\ref{triedges}, pass to an $r$-partite subgraph with partition classes $X_1, X_2, \dots, X_r$ each of size $n$ containing $2^rn^{r-\eps}$ edges, for some
	\[0 < \eps \le \frac{1}{9d}\left(\frac{1}{rd+1}\right)^{r-2};\]
	we shall only work with this $r$-partite $r$-graph, which we call $G$, in what follows. Our goal now is to show that we are guaranteed to find a copy of $H$ in $G$ provided $n \ge n_0(H)$ is sufficiently large.

	Our proof proceeds in two stages. In the first stage, we shall inductively construct a sequence of $i$-partite $i$-graphs $G_i \subset \Tr_i(G)$ with partition classes $X_1, X_2, \dots, X_{i}$ for $r-1 \ge i \ge 1$, with $G_{i}$ being constructed by feeding $G_{i+1}$ into Lemma~\ref{mainlem2}. To accomplish this iterative construction, we need to find a suitable $G_{r-1}$ from which to start, which we do as follows.

	\begin{claim}\label{mainlem1}
		There is an $(r-1)$-partite $(r-1)$-graph $G_{r-1} \subset \Tr_{r-1}(G)$ with partition classes $X_1, X_2, \dots, X_{r-1}$ such that
		\begin{enumerate}
			\item $e(G_{r-1}) \ge 2^{r-1} n^{r-1-\eps} $, and
			\item for each $J \in \HH(r-1,d)$, the set $\BB(J, G_{r-1})$ of copies of $J$ in $G_{r-1}$ that are contained in the link of fewer than $v(H)$ different vertices of $X_r$ in $G$ satisfies
			      \[|\BB(J, G_{r-1})| \le n^{v(J)-1/2}.\]
		\end{enumerate}
	\end{claim}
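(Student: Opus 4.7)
The plan is to follow the template of Lemma~\ref{mainlem2}: pick $x \in X_r$ uniformly at random, set $G_{r-1} = \LL(x, G) \subset \Tr_{r-1}(G)$, and use linearity of expectation together with a single linear-combination averaging argument to extract a good $x$. Because the notion of ``bad'' here is the single thresholding $|\Gamma(J', G)| < v(H)$ and there is no analogue of condition~\ref{B2} to propagate, I expect this to be strictly easier than Lemma~\ref{mainlem2}.

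First, by linearity of expectation, $\E[e(G_{r-1})] = e(G)/n \ge 2^r n^{r-1-\eps}$, which is already twice the required lower bound in~(1). Second, for each $J \in \HH(r-1, d)$ I would bound $\E[|\BB(J, G_{r-1})|]$ as follows. In the $r$-partite setting, for any copy $J'$ of $J$ in $\Tr_{r-1}(G)$ we have $\Gamma(J', G) \subset X_r$, and $v \in \Gamma(J', G)$ iff $J'$ lies in $\LL(v, G)$; so $J'$ belongs to $\BB(J, G_{r-1})$ precisely when it survives in $G_{r-1}$ and $|\Gamma(J', G)| < v(H)$. A fixed such bad $J'$ survives in $G_{r-1}$ only if $x \in \Gamma(J', G)$, an event of probability less than $v(H)/n$. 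Since the number of copies of $J$ in $\Tr_{r-1}(G)$ is trivially at most $n^{v(J)}$, linearity then gives $\E[|\BB(J, G_{r-1})|] \le v(H)\, n^{v(J)-1}$.

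Combining these estimates, I would consider the random variable
\[
Z \;=\; \frac{e(G_{r-1})}{2^{r-1} n^{r-1-\eps}} \;-\; 1 \;-\; \sum_{J \in \HH(r-1, d)} \frac{|\BB(J, G_{r-1})|}{v(H)\,|\HH(r-1,d)|\, n^{v(J)-1}},
\]
which satisfies $\E[Z] \ge 2 - 1 - 1 = 0$. Hence some $x$ achieves $Z \ge 0$; for this choice, conclusion~(1) drops out of the first two terms, while each summand in the sum is bounded by $e(G_{r-1})/(2^{r-1} n^{r-1-\eps}) \le n^\eps / 2^{r-1}$ (using the trivial bound $e(G_{r-1}) \le n^{r-1}$). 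Unwinding, this yields $|\BB(J, G_{r-1})| = O(n^{v(J)-1+\eps})$ for each $J \in \HH(r-1,d)$.

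The only delicate point --- and the ``main obstacle'', though it is really very minor --- is the passage from this $O(n^{v(J)-1+\eps})$ bound to the stated bound $n^{v(J)-1/2}$. For this one checks that the hypothesis $\eps \le \tfrac{1}{9d}\bigl(\tfrac{1}{rd+1}\bigr)^{r-2}$ forces $\eps \le 1/9$ for all $r \ge 2$, $d \ge 1$, comfortably away from $1/2$; the constants $v(H)$, $|\HH(r-1, d)|$, and $2^{r-1}$ are then absorbed into the standard ``$n \ge n_0(H)$ sufficiently large'' clause, completing the argument.
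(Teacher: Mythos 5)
Your proposal is correct and matches the paper's proof essentially verbatim: both pick $x \in X_r$ uniformly at random, set $G_{r-1} = \LL(x,G)$, compute the same two expectations, and extract a good $x$ from the same linear combination $Z$ (the paper writes the normalizing constant as $C = v(H)\,|\HH(r-1,d)|$, which is identical to yours). The paper closes the final step by noting $\eps < 1/2$ rather than your sharper $\eps \le 1/9$, but that is an immaterial difference.
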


	\begin{proof}
		The proof mirrors that of Lemma~\ref{mainlem2}, but involves less work since we are aiming to accomplish less. Indeed, choose a vertex $x\in X_r$ uniformly at random and let $G' = \LL(x, G) \subset \Tr_{r-1}(G)$ be the link of $x$ in $G$. As before, we clearly have
		\beq{edges}
		\E[e(G')] = e(G)/n =  2^{r} n^{r-1-\eps}.
		\enq
		For each $J \in \HH(r-1,d)$, the probability that a copy of $J$ in $\Tr_{r-1}(G)$ contained in the link of fewer than $v(H)$ different vertices of $X_r$ survives in $G'$ is at most $v(H)/ n$, so it follows that $\BB(J, G')$ satisfies
		\beq{badjs}
		\E [ |\BB(J, G')|] \le v(H) n^{v(J)-1}.
		\enq
		Setting $C = |\HH(r-1,d)| v(H) $ and putting~\eqref{edges} and~\eqref{badjs} together, we get
		\[\E \left[\frac{e(G')}{2^{r-1}n^{r-1-\eps}}- 1 - \frac{1}{C} \sum_{J \in \HH(r-1,d)}\frac{|\BB(J, G')|}{n^{v(J)-1}}\right] \ge 0.\]

		Consequently, there is at least one vertex in $X_r$ whose link $G'$ has the following properties:
		\begin{enumerate}
			\item $e(G') \ge 2^{r-1}n^{r-1-\eps} $, and
			\item for every $J \in \HH(r-1,d)$, we have
			      \[|\BB(J, G')|  \le \frac{C n^{v(J)-1}e(G')}{2^{r-1}n^{r-1-\eps}} \le 2^{1-r} C n^{v(J)-1 +\eps} \le n^{v(J)-1/2};  \]
			      here, we use the facts that $e(G') \le n^{r-1}$, that $\eps<1/2$, and that $n$ is sufficiently large.
		\end{enumerate}
		The claim follows by taking $G_{r-1}$ to be a link $G'$ with the above properties.
	\end{proof}

	Let $G_{r-1}$ be the $(r-1)$-graph promised by Claim~\ref{mainlem1} and set $\delta_{r-1}=1/2$. We know that
	\begin{enumerate}
		\item $e(G_{r-1}) \ge 2^{r-1} n^{r-1-\eps}$, and
		\item for each $J \in \HH(r-1,d)$, the set $\BB(J, G_{r-1})$ of copies of $J$ in $G_{r-1}$ contained in fewer than $v(H)$ distinct links in $G$ satisfies $|\BB(J, G_{r-1})| \le n^{v(J)-\delta_{r-1}}$.
	\end{enumerate}

	For $r-2 \ge i \ge 1$, having constructed $G_{i+1}$ with
	\begin{enumerate}
		\item $e(G_{i+1}) \ge 2^{i+1}n^{i+1-\eps}$ along with,
		\item for each $J \in \HH(i+1, d)$, a set $\BB(J, G_{i+1})$ of at most $n^{v(J)-\delta_{i+1}}$ bad copies of $J$ in $G_{i+1}$,
	\end{enumerate}
	we apply Lemma~\ref{mainlem2} to $G_{i+1}$ to construct $G_i$ such that
	\begin{enumerate}
		\item $e(G_i) \ge 2^{i}n^{i-\eps}$, and
		\item for each $J \in \HH(i, d)$, the set $\BB(J, G_i)$ of copies of $J$ in $G_i$ that are $\beta_i$-bad with respect to $G_{i+1}$ satisfies
		      \[|\BB(J, G_i)| \le n^{v(J)-\delta_i},\]
		      where $\beta_i=\delta_{i+1}/((i+1)d+1)$ and $\delta_{i}=\beta_i-2\eps$.
	\end{enumerate}

	Since $\delta_{r-1}=1/2$ and $\delta_i={\delta_{i+1}}/{((i+1)d+1)}-2\eps\ge {\delta_{i+1}}/{(rd+1)}-2\eps$ for each $r-2 \ge i \ge 1$, we get that
	\[
		\delta_1\ge \frac12\left(\frac{1}{rd+1}\right)^{r-2}-2\eps \left(\frac{1- (1/(rd+1))^{r-2}}{1- 1/(rd+1)} \right)\ge \frac12 \left(\frac{1}{rd+1}\right)^{r-2} -3\eps > d\eps,
	\]
	where the last inequality above uses the fact that $\eps \le (1/9d) (1/(rd+1))^{r-2}$. Consequently, we have
	\begin{enumerate}
		\item $1/2 = \delta_{r-1} \ge \delta_{r-2} \ge \dots \ge \delta_1 > d \eps$, and
		\item $1/2 \ge \beta_{r-2} \ge \beta_{r-3} \ge \dots \ge \beta_1 > 0$.
	\end{enumerate}

	We are now ready for the second stage of the proof where we embed $H$ into $G$. First, we may assume that $H$ has no isolated vertices, i.e., any vertices $y$ for which $\Deg(y, H) = 0$; indeed, if $H$ has isolated vertices, we may embed the rest of $H$ into $G$ first, and then embed the isolated vertices into $G$ arbitrarily provided $n$ is sufficiently large. By assuming $H$ has no isolated vertices, we have $\Tr_r(H) = H$ and furthermore, the link of any vertex in $\Tr_j(H)$ is nonempty for each $2 \le j \le r$.

	For $1 \le j \le r-1$, we shall sequentially construct injective maps $\phi_j : Y_j \to X_j$ in such a way that
	\begin{enumerate}
		\item each induced map $\Phi_{j}: Y_1 \cup Y_2 \cup \dots \cup Y_j \to X_1 \cup X_2 \cup \dots \cup X_j$ is an embedding of $\Tr_{j}(H)$ into $G_{j}$, and
		\item for each nonempty subgraph $J$ of $\Tr_j(H)$ with at most $d$ edges, no copy of $J$ in the image of $\Phi_{j}$ is in $\BB(J, G_{j})$.
	\end{enumerate}

	Observe that $G_1$ is a 1-graph on $X_1$, which is just a subset of $X_1$, so $v(G_1) = e(G_1)$, and we have
	\[m = v(G_1) = e(G_{1})\ge 2n^{1-\eps}.\]
	We construct $\phi_1: Y_1 \to X_1$ in such a way that for every $J \in \HH(1,d)$, no copy of $J$ in $\phi_1 (\Tr_1(H))$ is in $\BB(J, G_1)$, i.e., is $\beta_1$-bad with respect to $G_2$. By construction, we have $|\BB(J, G_1)| \le n^{v(J)-\delta_1}$ for each $J \in \HH(1,d)$. Note  that $\HH(1,d)$ consists of $d$ elements, namely one set of cardinality $t$ for each $1 \le t \le d$, and also note that the number of $\beta_1$-bad $t$-sets in $G_1$ is $o(m^t)$ for each $1 \le t\le d$ since $\delta_1/t \ge \delta_1/d > \eps$. It follows that the number of problematic $d$-sets in $G_1$, namely those containing a $\beta_1$-bad $t$-set for some $1 \le t \le d$, is $o(m^d)$, so we may choose a subset of $X_1$ of size $|Y_1|$ which does not contain any such problematic $d$-set provided $n$ is sufficiently large, as can be seen, for example, by applying a bound of de Caen~\cite{deCaen} to the $d$-graph of all problematic $d$-sets. In other words, we can choose a subset $S$ of $X_1$ of size $|Y_1|$ in such a way that for each $J \in \HH(1,d)$, no copy of $J$ in $\BB(J, G_1)$ is contained in $S$; we take $\phi_1$ to be any injective map from $Y_1$ to $S$.

	For $2 \le j \le r-2$, suppose that we have constructed injective maps $\phi_1, \phi_2, \dots, \phi_{j-1}$ as above. We now extend $\Phi_{j-1}$ to $\Phi_j$ by defining a suitable map $\phi_j:Y_j\to X_j$ randomly as follows.

	Since $H$ is $d$-trace-bounded, each vertex $y\in Y_j$ has degree at most $d$ in $\Tr_j(H)$. Given a vertex $y\in Y_j$, let $L(y) = \LL(y, \Tr_j(H))$ be the link of $y$ in $\Tr_j(H)$, so that $L(y)$ is a nonempty subgraph of $\Tr_{j-1}(H)$ with at most $d$ edges. Inductively, we know that $\Phi_{j-1}(L(y))$ is a subgraph of $G_{j-1}$. We choose $\phi_j(y)$ uniformly at random from the common neighbourhood $\Gamma(\Phi_{j-1}(L(y)), G_j) \subset X_j$. By choosing, for each $y \in Y_j$, the image $\phi_j(y)$ of $y$ from the set $\Gamma(\Phi_{j-1}(L(y)), G_j)$, we have ensured that the induced map $\Phi_j$ is a homomorphism from $\Tr_j(H)$ into $G_j$. We claim that with positive probability, both of the following events hold:
	\begin{enumerate}[label = {\bfseries{E\arabic{enumi}}}]
		\item\label{AA} the induced map $\Phi_{j}$ is injective, i.e., is an embedding of $\Tr_{j}(H)$ into $G_{j}$, and
		\item\label{BB} for each nonempty subgraph $J$ of $\Tr_j(H)$ with at most $d$ edges, no copy of $J$ in the image of $\Phi_{j}$ is in $\BB(J, G_{j})$.
	\end{enumerate}

	To deal with~\ref{AA}, note that for any vertex $y\in Y_j$, the $(j-1)$-graph $L(y) = \LL(y, \Tr_j(H))$ is nonempty and has at most $d$ edges, and inductively, its image $\Phi_{j-1}(L(y))$ is not in $\BB(L(y), G_{j-1})$, namely the set of copies of $L(y)$ in $G_{j-1}$ that are $\beta_{j-1}$-bad with respect to $G_j$, so it follows that
	\beq{choices}|\Gamma(L(y), G_j)|\ge n^{1-\beta_{j-1}}.
	\enq
	Since $\beta_{j-1} \le 1/2$, the probability that $\phi_j$, and therefore $\Phi_j$, fails to be injective is easily seen to be $o(1)$, whence we certainly have $\Prob(\text{\textbf{E1}}) > 1/ 2$ provided $n$ is sufficiently large.

	To address~\ref{BB}, we argue as follows. Let $J$ be a nonempty subgraph of $\Tr_j(H)$ with at most $d$ edges, and let $L=\Tr_{j-1}(J)$. The probability of the event that $\Phi_j(J) \in \BB(J ,G_j)$ may be bounded above as follows. By the bound in~\eqref{choices}, the number of choices for $\phi_{j}$ on $V(J) \setminus V(L)$ is at least $(n^{1-\beta_{j-1}})^{v(J)-v(L)}$. On the other hand, since $L$ is nonempty with at most $d$ edges, we know inductively that $\Phi_{j-1}(L) \notin \BB(L, G_{j-1})$. Therefore, $\Phi_{j-1}(L)$ is not $\beta_{j-1}$-bad with respect to $G_j$, from which it follows that $\Phi_{j-1}(L)$ is contained in at most
	\[n^{-2\beta_{j-1}}\left(n^{1-\beta_{j-1}}\right)^{v(J)-v(L)-1}|\Gamma(\Phi_{j-1}(L), G_j)|\] copies of $J$ in $G_j$ that belong to $\BB(J, G_j)$. Hence, the probability of $\Phi_j(J) \in \BB(J ,G_j)$ is at most
	\[\frac{n^{-2\beta_{j-1}}\left(n^{1-\beta_{j-1}}\right)^{v(J)-v(L)-1}|\Gamma(\Phi_{j-1}(L), G_j)|}{\left(n^{1-\beta_{j-1}}\right)^{v(J)-v(L)}}\le \frac{n^{1-2\beta_{j-1}}}{n^{1-\beta_{j-1}}}=n^{-\beta_{j-1}}=o(1),\]
	where we use the facts that $|\Gamma(\Phi_{j-1}(L), G_j)| \le n$ and that $\beta_{j-1} > 0$. Summing this estimate over the $O(1)$ choices of nonempty subgraphs $J$ of $\Tr_j(H)$ with $e(J) \le d$ shows that $\Prob(\text{\textbf{E2}}) > 1/ 2$ provided $n$ is sufficiently large as well, which together with the fact that $\Prob(\text{\textbf{E1}}) > 1/ 2$ establishes the existence of an appropriate $\phi_j$.

	Now, we finish by extending $\Phi_{r-1}$ to an embedding $\Phi_r: Y_1 \cup Y_2 \cup \dots Y_r \to X_1 \cup X_2 \cup \dots X_r$ of $H$ into $G$ by defining a final injective map $\phi_r:Y_r \to X_r$. Recall that for each $J \in \Tr_{r-1}(H)$, the set $\BB(J, G_{r-1})$ consists of those copies $J'$ of $J$ in $G_{r-1}$ for which the number of vertices $x \in X_r$ whose link $\LL(x, G)$ contains $J'$ as a subgraph is at most $v(H)$. We may now define $\phi_r$ by greedily picking, for each $y \in Y_r$, a distinct vertex $\phi_r(y) \in \Gamma(\Phi_{r-1}(\LL(y, H)), G) \subset X_r$; this is always possible since $\Phi_{r-1}(\LL(y, H)) \notin \BB(\LL(y, H), G_{r-1})$. It follows that $\Phi_r(H)$ is a copy of $H$ in $G$, completing the proof.
\end{proof}

\section{Conclusion}\label{sec:conc}
A number of open problems remain, and we conclude by highlighting those that we think are particularly deserving of attention.

With regard to homeomorphs, the outstanding problem is to determine the optimal values of the exponents $\lambda_k$ in Theorem~\ref{mainthm1} for each $k \ge 2$. As mentioned earlier, even the optimal value of $\lambda_2$ is not known, though there are good reasons to expect it to be $1/2$. In higher dimensions, we are only able to speculate: could it be that for each $k \ge 2$, the optimal value of $\lambda_k$ is precisely $\lambda(S^k)$, where $S^k$ is (any triangulation of) the $k$-sphere? This is indeed the underlying mechanism behind the prediction of the value of $1/2$ in dimension two, but we do not know, nor do we have a guess for, the value of $\lambda(S^k)$ for any $k \ge 3$; for example, a combination of a random construction and a generalisation of the argument of the Brown, Erd\H{o}s and S\'os~\cite{bes} shows that $1/4 \le \lambda(S^3) \le 1/3$, but we have no reason to think either bound reflects the truth.

With regard to trace-bounded hypergraphs, the main problem again is to determine the optimal values of the exponents $\alpha_{r,d}$ in Theorem~\ref{mainthm2} for each $r \ge 3$ and $d\in \N$. As remarked upon earlier, we know that the optimal value of $\alpha_{2,d}$ is $1/d$, but even formulating a natural guess for the optimal value of $\alpha_{r,d}$ when $r \ge 3$, let alone proving it, would be of some interest.


\bibliographystyle{amsplain}

\begin{thebibliography}{10}

\bibitem{AKS}
N.~Alon, M.~Krivelevich, and B.~Sudakov, \emph{Tur\'{a}n numbers of bipartite
  graphs and related {R}amsey-type questions}, Combin. Probab. Comput.
  \textbf{12} (2003), 477--494.

\bibitem{bes}
W.~G. Brown, P.~Erd\H{o}s, and V.~T. S\'os, \emph{On the existence of
  triangulated spheres in {$3$}-graphs, and related problems}, Period. Math.
  Hungar. \textbf{3} (1973), 221--228.

\bibitem{CFS}
D.~Conlon, J.~Fox, and B.~Sudakov, \emph{Ramsey numbers of sparse hypergraphs},
  Random Structures Algorithms \textbf{35} (2009), 1--14.

\bibitem{deCaen}
D.~de~Caen, \emph{Extension of a theorem of {M}oon and {M}oser on complete
  subgraphs}, Ars Combin. \textbf{16} (1983), 5--10.

\bibitem{degen}
P.~Erd\H{o}s, \emph{On extremal problems of graphs and generalized graphs},
  Israel J. Math. \textbf{2} (1964), 183--190.

\bibitem{Furedi}
Z.~F\"{u}redi, \emph{On a {T}ur\'{a}n type problem of {E}rd{\H o}s},
  Combinatorica \textbf{11} (1991), 75--79.

\bibitem{turan3}
Z.~F\"{u}redi and M.~Simonovits, \emph{The history of degenerate (bipartite)
  extremal graph problems}, Erd\H{o}s Centennial, Bolyai Soc. Math. Stud.,
  vol. 25, J\'{a}nos Bolyai Math. Soc., Budapest (2013), 169--264.

\bibitem{turan2}
P.~Keevash, \emph{Hypergraph {T}ur\'{a}n problems}, Surveys in Combinatorics, London Math. Soc. Lecture Note Ser., \textbf{392}, Cambridge Univ. Press,
  Cambridge (2011), 83--139.

\bibitem{myhomeo}
P.~Keevash, J.~Long, B.~Narayanan, and A.~Scott, \emph{A universal exponent for
  homeomorphs}, Israel J. Math., \textbf{243.1} (2021), 141--154.

\bibitem{KMV}
A.~Kostochka, D.~Mubayi, and J.~Verstra\"{e}te, \emph{Tur\'{a}n problems and
  shadows {III}: expansions of graphs}, SIAM J. Discrete Math. \textbf{29}
  (2015), 868--876.

\bibitem{surfaces}
A.~Kupavskii, A.~Polyanskii, I.~Tomon, and D.~Zakharov, \emph{The extremal
  number of surfaces}, Preprint, arxiv:2010.07191.

\bibitem{linialques1}
N.~Linial, \emph{What is high-dimensional combinatorics?}, Random--Approx
  (2008).

\bibitem{linialques2}
N.~Linial, \emph{Challenges of high-dimensional combinatorics}, Lov\'asz's
  Seventieth Birthday Conference (2018).

\bibitem{linial4}
N.~Linial and Z.~Luria, \emph{Discrepancy of high-dimensional permutations},
  Discrete Analysis (2016:11), 8 pp.

\bibitem{linial3}
N.~Linial and Y.~Peled, \emph{On the phase transition in random simplicial
  complexes}, Ann. of Math. \textbf{184} (2016), 745--773.

\bibitem{mader1}
W.~Mader, \emph{Homomorphieeigenschaften und mittlere {K}antendichte von
  {G}raphen}, Math. Ann. \textbf{174} (1967), 265--268.

\end{thebibliography}


\begin{dajauthors}
\begin{authorinfo}[jl]
  Jason Long\\
  Cambridge CB1\thinspace1JH, UK\\
  jasonlong272\imageat{}gmail\imagedot{}com
\end{authorinfo}
\begin{authorinfo}[bn]
  Bhargav Narayanan\\
  Department of Mathematics, Rutgers University\\
  Piscataway, NJ, USA\\
  narayanan\imageat{}math\imagedot{}rutgers\imagedot{}edu \\
  \url{https://sites.math.rutgers.edu/~narayanan/}
\end{authorinfo}
\begin{authorinfo}[cy]
  Corrine Yap\\
  Department of Mathematics, Rutgers University\\
  Piscataway, NJ, USA\\
  corrine\imagedot{}yap\imageat{}rutgers\imagedot{}edu\\
  \url{http://www.corrineyap.com/}
\end{authorinfo}

\end{dajauthors}

\end{document}